\newtheorem{theorem}{Theorem}[section]
\newtheorem{lemma}{Lemma}[section]
\newtheorem{proposition}{Proposition}[section]
\newtheorem{remark}{Remark}[section]
\newtheorem{definition}{Definition}[section]
\DeclareMathOperator{\grad}{grad}
\title{Proximal Point Method for Vector Optimization on  Hadamard Manifolds}
\author{G. C. Bento\thanks{IME, Universidade Federal de Goi\'as,
Goi\^ania, GO 74001-970, BR ({\tt glaydston@ufg.br}).}
\and
 Ferreira, O. P.
\thanks{IME, Universidade Federal de Goi\'as,
Goi\^ania, GO 74001-970, BR({\tt orizon@ufg.br}).}
\and
 Pereira,  Y. R. L..
\thanks{IME, Universidade Federal de Goi\'as,
Goi\^ania, GO 74001-970, BR({\tt orizon@ufg.br}).}
}
\begin{document}

\maketitle

\begin{abstract} In this paper, we extend the proximal point algorithm for vector optimization from the Euclidean space  to the  Riemannian  context. Under suitable assumptions  on the objective function the well definition and  full  convergence  of the    method  to a weak efficient point is proved.  \\

\noindent
{\bf Keywords:}  Proximal method; Vector optimization; Fej\'er convergence;   Hadamard manifolds.

\noindent
{\bf AMS:}  90C30,  26B25, 65K05, 53C21.
\end{abstract}
\section{Introduction}

 In recent years,  extensions to Riemannian manifolds of concepts and techniques which fit in linear spaces are natural. Several  algorithms for optimization  problem which involve convexity of the  objective function have been extended from the linear settings to  the Riemannian context; see, for instance,  \cite{Bacak2013, Barani2009, FerreiraOliveira2002, HosseiniPouryayevali2013,  PapaQuirozOliveira2012, WangLi2015, WangLiYao2015}  and the references therein. One reason for the success of this extension is the possibility to transform, by introducing a suitable Riemannian metric, nonconvex problems in the linear context into convex problems in the  Riemannian context; see   \cite{BentoJefferson2012, ColaoGenaro2012,  CruzNetoFerreira2006, UdristeLivro1994}. 

In the last few  years, researchers began the study of the vector optimization  problems on Riemannian manifolds context;  papers dealing with this issues include  Bento and Cruz Neto \cite{BentoCruzNeto2013}, Bento et al. \cite{BCNS2013},  Bento et al. \cite{BFO2012} and  Bonnel et al. \cite{BonnelUd2015}. The present  paper  deals with  the extension of  the proximal point method for vector optimization from the Euclidean settings  to the  Riemannian context, which continues the subject addressed in the following papers \cite{FerreiraOliveira2002, BentoFO2015, BentoFO2010, WangLiYao2015, LiGeVic2009}.  To our best knowledge, this is the first paper extending the proximal point method for vector optimization to the Riemannian settings. Besides  our approach  is new even in  Euclidean context, since  we are dealing with general convex cone and the nonlinear  scalarization is more  flexible than the considered in \cite{BentoCNS2014}. Under suitable assumptions  on the objective function the well definition and  full  convergence  of the    method  to a weak efficient point is proved.   It is worth to point out  that under assumption of null sectional curvature, our algorithm retrieves the proximal point method for multobjective  presented in \cite{BentoCNS2014} and, somehow, goes further.

This paper is organized as follows. In Section~\ref{ssec1}, we present the notations and terminology used in the paper.   In  Section~\ref{sec2},  we present  the  vector optimization problem and  the  proximal  point method. Our main results are stated and proved in Section~\ref{sec3}, and conclusions are discussed in Section~\ref{sec4}.
\subsection{Notation and Terminology } \label{ssec1}
In this section, we introduce some notations about Riemannian geometry, which can be found in any introductory book on Riemannian geometry, such as in Sakai \cite{S1996},  Do Carmo \cite{C1992}.  Let $M$ be a $n$-dimentional Hadamard  manifold. {\it In this paper, all manifolds $M$ are assumed to be Hadamard finite dimensional}. We denote by $T_pM$ the $n$-dimentional {\it tangent space} of $M$ at $p$, by $TM=\cup_{p\in M}T_pM$ {\itshape{tangent bundle}} of $M$ and by ${\cal X}(M)$ the space of smooth vector fields over $M$.  The Riemannian  distance between $p, q\in M$,  denoted by $d(p,q)$, and given a nonempty set $U\subset M$, the distance function associated to $U$ is given by:
$$
M\ni p \mapsto d(p,U):= \inf \{d(p,q)~:~ q\in U\}.
$$
Since $M$ is a Hadamard manifold, the Riemannian distance $d$ induces the original topology on $M$, namely,  $( M, d)$ is a complete metric space and bounded and closed subsets are compact.   The open metric ball at $p\in M$  is given by 
$
B (p,r):=\{ q\in M ~: ~ d(p, q)<r\}, 
$
where $r>0$.  The Riemannian metric is denoted by  $\langle \,,\, \rangle$ and  the corresponding norm  by $\| \; \|$.  The metric induces a map $g\mapsto\grad g\in{\cal X}(M)$ which associates to each function differentiable over $M$ its gradient via the rule $\langle\grad g,X\rangle=d g(X),\ X\in{\cal X}(M)$.   The  geodesic  determined by its position $p$ and velocity $v$ at $p$ is denoted by $\gamma=\gamma _{v}(.,p)$.  The restriction of a geodesic to a  closed bounded interval is called a {\it geodesic segment}. Since $M$ is a Hadamard manifold the lenght   of the  geodesic segment  $\gamma$  joining $p$ to $q$ is equals $d(p,q)$. Moreover, {\it exponential map} $exp_{p}:T_{p}  M \to M $ is defined by $exp_{p}v\,=\, \gamma _{v}(1,p)$ is  a diffeomorphism and, consequently, $M$ is diffeomorphic to the Euclidean space $\mathbb{R}^n $, $ n=dim M $.  Let ${q}\in M $ and $exp^{-1}_{q}:M\to T_{p}M$ be the inverse of the exponential map. Note that $d({q}\, , \, p)\,=\,||exp^{-1}_{p}q||$,  the function $d_{q}^2: M\to\mathbb{R}$ defined by $ d_{q}^2(p)=d^2(q,p)$ is  $C^{\infty}$ and $ \grad d_{q}^2(p):=-2exp^{-1}_{q}{p}.$ Furthermore,  we know that
\begin{equation} \label{ineq.had.}
d^2(p_1,p_3)+d^2(p_3,p_2)-\langle \exp_{p_3}^{-1}p_1,\exp_{p_3}^{-1}p_2\rangle\leq d^2(p_1,p_2). \qquad p_1, p_2 , p_3 \in M.
\end{equation}
A set  $\Omega\subseteq M$ is said to be {\it convex}  if any geodesic segment with end points in $\Omega$ is contained in
$\Omega$, that is,  if $\gamma:[ a,b ]\to M$ is a geodesic such that $x =\gamma(a)\in \Omega$ and $y =\gamma(b)\in \Omega$; then $\gamma((1-t)a + tb)\in \Omega$ for all $t\in [0,1]$.  Let $g:M\to\mathbb{R}\cup\{+\infty\}$  be a function. The {\it domain} of $g$ is the set  $\mbox{dom}g:=\left\{ p\in M ~: ~g(p)<\infty \right\}.$ The function $g$ is said to be proper if $\mbox{dom}~g\neq \varnothing$ and  {\it convex}  (resp.  {\it strictly convex, strongly  convex}) on a convex set $\Omega\subset \mbox{dom}~g$ if for any geodesic segment $\gamma:[a, b]\to\Omega$ the composition $g\circ\gamma:[a, b]\to\mathbb{R}$ is convex  (resp.  {\it strictly convex, , strongly  convex}). It is well known that, for each $q\in M$,  $d_{q}^2$ is strongly convex. Take $p\in \mbox{dom}~g$. A vector $s \in T_pM$ is said to be a {\it subgradient\/} of $g$ at $p$, if $g(q) \geq g(p) + \langle s, \, \exp^{-1}_pq\rangle$, for $q\in M.$ The set  $\partial g(p)$ of all subgradients of $g$ at $p$ is called the {\it subdifferential\/} of $g$ at $p$. We remark that,  if $g$  is  convex   then $\partial g(p)\neq \varnothing$ for all $p\in \mbox{dom}~g$. Moreover, if  $g$ is differentiable  at $p$ then $\partial g(p)=\{\mbox{grad}f(p)\}$. The function  $g$ is {\it lower semicontinuous} (lsc) at $\overline{x}\in\mbox{dom}g$ if for each sequence $\{x^n\}$ converging to $\overline{x}$ we have $ \liminf_{n\rightarrow\infty} g(x^n)\geq g(\overline{x}).$ Given a closed set $\Omega \subset\mathbb M$, it is known that indicator function of $\Omega$, $I_\Omega:M \rightarrow \mathbb{R}\cup\{+\infty\}$, is a lower semicontinuous function and,   for each $p \in M$,  $\partial I_\Omega(p)=N_\Omega(p)$, where
$$
N_\Omega(p):=\left\{v\in T_pM ~: ~ \langle v, \; \exp^{-1}_p q\rangle \leq 0, ~q \in \Omega \right\}.
$$
\begin{definition}
A sequence $\{p^k\} \subset (M,d)$ is said to Fej\'er convergence to a set $W\subset M$ if, for every $q \in W$  we have
$d^2(w,p^{k+1})\leq d^2(w,p^k).$
\end{definition}
\begin{proposition}\label{fejer}
Let $\{p^k\}$ be a sequence in $(M,d)$. If $\{p^k\}$ is Fej\'er convergent to non-empty set $W\subset M$, then $\{p^k\}$ is bounded. If furthermore, an accumulation point $p$ of $\{p^k\}$ belongs to $W$, then $\lim_{k\rightarrow \infty}p^k=p$. 
\end{proposition}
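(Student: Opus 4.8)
The plan is to treat the two assertions separately, exploiting in both cases the monotonicity of the squared-distance sequences guaranteed by Fej\'er convergence.

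For the boundedness claim, I would fix an arbitrary point $w\in W$, which is possible since $W\neq\varnothing$. By the definition of Fej\'er convergence applied with this $w$, the real sequence $\{d^2(w,p^k)\}$ is non-increasing in $k$, hence bounded above by its initial term $d^2(w,p^0)$. Consequently every $p^k$ lies in the closed ball centered at $w$ of radius $d(w,p^0)$; since $M$ is Hadamard, closed and bounded subsets are compact, so this ball is bounded and the sequence $\{p^k\}$ is bounded as claimed.

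For the convergence statement, I would invoke the extra hypothesis that some accumulation point $p$ of $\{p^k\}$ lies in $W$. Applying the Fej\'er property with $w=p$ shows that $\{d^2(p,p^k)\}$ is non-increasing and bounded below by $0$, hence convergent to some limit $\ell\geq 0$. On the other hand, since $p$ is an accumulation point there is a subsequence $\{p^{k_j}\}$ with $p^{k_j}\to p$; by continuity of the Riemannian distance this gives $d^2(p,p^{k_j})\to d^2(p,p)=0$. Since the limit of a convergent monotone sequence is determined by any of its subsequential limits, we conclude $\ell=0$, so $d(p,p^k)\to 0$, that is $\lim_{k\to\infty}p^k=p$.

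I do not expect a serious obstacle here: the argument is entirely elementary once one records that Fej\'er convergence makes each map $k\mapsto d^2(w,p^k)$ monotone. The only step requiring a little care is the final one, where monotonicity of $\{d^2(p,p^k)\}$ is combined with the existence of a subsequence tending to $0$ to force the whole sequence to converge to $0$; this rests on the fact that a bounded monotone real sequence converges and that its limit agrees with that of any convergent subsequence.
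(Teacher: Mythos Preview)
Your argument is correct and is precisely the standard proof of this well-known fact about Fej\'er monotone sequences. Note, however, that the paper does not actually supply a proof of this proposition: it is stated as a preliminary result and used later without justification, so there is no ``paper's own proof'' to compare against. One minor remark: in the boundedness step you invoke that closed bounded subsets of a Hadamard manifold are compact to conclude the ball is bounded, but this is unnecessary---the closed ball is bounded by definition, and that alone suffices for the claim.
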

\section{The  Proximal Point  Method for Vector Optimization  }\label{sec2}
In this section,  we present  the  vector optimization problem,  some concepts and results related to this problem,  and   introduce the  proximal  point method for this problem.

Let  $C\subset\mathbb{R}^m$  be  a closed,  pointed and  convex cone.  We will  use the  binary relations  $\preceq_C$ and $\prec_C$  defined,  respectively,  by   $p \preceq_C       q $ means  $q-p \in C$ and $p \prec_C q$ means  $q-p \in \mbox{int}C$, for all $p, q\in \mathbb{R}^m$.   Given a continuously differentiable vector function $F: M\to \mathbb{R}^m$, we consider the problem of finding an {\it  efficient point} of F, i.e., a point $p^*\in \mathbb{R}^n$ such that there exists no  $p\in M$ with $F(p) \preceq_C F(p^*)$ and $F(p) \neq F(p^*)$.  We denote this unconstrained problem as
\begin{equation}\label{VOP}
C-\mbox{Min}_{p \in M} F(p).
\end{equation}
We say that $p^*\in M$ is a {\it  weakly efficient point}  of \eqref{VOP} if there is no  $p\in M$ such that $F(p)\prec_C F(p^*)$.  The set of the   weakly efficient points of \eqref{VOP} is denoted by  $C-\mbox{argmin}_w\{F(p)|p\in M\}$.  Throughout this paper we assume that \eqref{VOP}  satisfies   the   following assumption:
\begin{description}
	\item[{\bf (A1)}] There exists a compact set $Z \subset \mathbb{R}^m\backslash \{0\}$ such that $C=\{y\in\mathbb{R}^m~: ~\langle y,z\rangle\geq 0,  ~ z\in Z\}$.
\end{description}
\begin{remark}
In classical optimization $C=\mathbb{R}_+$ and we can take $Z=\{1\}$. For multiobjective optimization, $C$ is  the positive orthant of $\mathbb{R}^m$ and we can take $Z$ as the canonical base of $\mathbb{R}^m$. For a generic cone $C$ we can take $Z=\{z \in C^*~:~\|z\|_1=1\}$,
 where   $C^*:=\{y\in\mathbb{R}^m~: ~\langle y,x\rangle\geq 0,  ~ x\in \mathbb{R}^m\}$ and   $\|z\|_1:= |z_1|+|z_2|+\cdots + |z_m|$.
 \end{remark}
We consider the following   nonlinear scalar function  $f:\mathbb{R}^m\rightarrow \mathbb{R}$, which  will play an important role in our analysis,  defined by
\begin{equation}\label{non.scal.1}
f(y):=\inf\{t\in \mathbb{R} ~: ~\,te\in y + C \},
\end{equation}
where $e$ is any fixed point in int$C$; see \cite{XL2010}. In \cite[Proposition 1.44]{CHY2005} it was proved that  the nonlinear function above can be rewritten as follow 
\begin{equation}\label{non.scal.2}
f(y)=\max_{z\in Z}\frac{\langle y,z \rangle}{\langle e,z\rangle}.
\end{equation}
\begin{remark}
For the multiobjective case,   \eqref{non.scal.1} becomes    $f(y)=\max_{i \in I}\langle y ,e_i\rangle$,  where $ \{e_i\} \subset \mathbb{R}^m$ is the canonical base of the space in $\mathbb{R}^m$, which has been used in  \cite{BentoCNS2014}.
\end{remark}
Next lemma, which proof is trivial, gives us some properties of the function above  that will be useful through  the paper.
\begin{lemma}\label{lem.scal.0}
Let $f:\mathbb{R}^m\rightarrow \mathbb{R}$ be a nonlinear  function defined in \eqref{non.scal.2}, then following properties hold:
\begin{itemize}
\item[i)]Given $y\in \mathbb{R}^m$, $\alpha\in \mathbb{R} $ and $t >0$, $f(ty+\alpha e)= tf(y)+ \alpha$;
\item[ii)]If $y\preceq_C z$ then $f(y)\leq f(z)$ for any $y,\,z \in \mathbb{R}^m$.
\end{itemize}
\end{lemma}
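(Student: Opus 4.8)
The plan is to argue entirely from the closed-form expression \eqref{non.scal.2}, so that both items reduce to elementary order properties of the maximum. The one structural fact I would record first is that $\langle e,z\rangle>0$ for every $z\in Z$: by (A1) each $z\in Z$ is a nonzero functional that is nonnegative on $C$, hence lies in $C^*$, and since $e\in\inter C$ a standard interior argument (if $\langle e,z\rangle$ were $0$, then $e-\varepsilon z/\|z\|\in C$ for small $\varepsilon>0$ would force $-\varepsilon\|z\|\ge0$) shows the pairing is strictly positive. This guarantees that each ratio $\langle y,z\rangle/\langle e,z\rangle$ is well defined and, crucially, that dividing an inequality by $\langle e,z\rangle$ preserves its direction.

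For item (i), I would substitute $ty+\alpha e$ into \eqref{non.scal.2} and expand by bilinearity, which would give
\begin{equation*}
f(ty+\alpha e)=\max_{z\in Z}\frac{t\langle y,z\rangle+\alpha\langle e,z\rangle}{\langle e,z\rangle}=\max_{z\in Z}\left(t\,\frac{\langle y,z\rangle}{\langle e,z\rangle}+\alpha\right).
\end{equation*}
Because $t>0$ and $\alpha$ does not depend on $z$, the positive scalar $t$ can be pulled out of the maximum and the additive constant $\alpha$ separated off, yielding $t\,f(y)+\alpha$. The hypothesis $t>0$ is exactly what is needed here, since a negative factor would convert the maximum into a minimum.

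For item (ii), I would translate $y\preceq_C z$ through (A1): $z-y\in C$ means $\langle z-y,w\rangle\ge 0$, i.e.\ $\langle y,w\rangle\le\langle z,w\rangle$, for every $w\in Z$ (I rename the index of the maximum to $w$ to avoid a clash with the point $z$ in the statement). Dividing by the positive number $\langle e,w\rangle$ preserves the inequality, so $\langle y,w\rangle/\langle e,w\rangle\le\langle z,w\rangle/\langle e,w\rangle$ for each $w\in Z$; taking the maximum over $w\in Z$ on both sides then gives $f(y)\le f(z)$.

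Since the argument is purely formulaic, there is no genuine obstacle beyond the preliminary positivity $\langle e,z\rangle>0$; that single observation is what renders the denominators harmless and lets both the homogeneity in (i) and the monotonicity in (ii) follow from the order-preserving behavior of the maximum. This is consistent with the statement's remark that the proof is trivial.
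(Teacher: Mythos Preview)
Your argument is correct and complete; both items follow directly from the explicit formula \eqref{non.scal.2} together with the positivity $\langle e,z\rangle>0$, exactly as you set out. The paper omits the proof entirely (calling it trivial), so there is no alternative approach to compare against---your write-up is precisely the elementary verification the authors had in mind.
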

\begin{proposition}\label{scal.1}
Let $F: M \rightarrow \mathbb{R}^m$ be a vectorial function and $C \subset M$ a closed set. Then,
$$
\mbox{argmin}_{p\in C} f(F(p)) \subset \mbox{argmin}w_{p\in C} F(p).
$$
\end{proposition}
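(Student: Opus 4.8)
The plan is to reduce the inclusion to a single monotonicity property of the scalarizing function $f$ from \eqref{non.scal.2}, namely its \emph{strict} monotonicity with respect to the ordering: if $y \prec_C z$ then $f(y) < f(z)$. Granting this, the proposition follows by contraposition. Suppose $p^* \in \argmin_{p \in C} f(F(p))$ were not weakly efficient for $F$ over the constraint set $C$; then there would exist $p \in C$ with $F(p) \prec_C F(p^*)$, and strict monotonicity would give $f(F(p)) < f(F(p^*))$, contradicting the minimality of $p^*$. Hence every minimizer of $f \circ F$ on $C$ is a weakly efficient point, which is exactly the asserted inclusion.

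It remains to establish the strict monotonicity, and here I would argue directly from the representation $f(y) = \max_{z \in Z} \langle y, z\rangle/\langle e, z\rangle$. Two preliminary observations are needed. First, because $e \in \inter C$ and $Z$ is compact with $0 \notin Z$, the continuous map $z \mapsto \langle e, z\rangle$ is strictly positive on $Z$, hence bounded below by some $\delta > 0$; this makes every quotient in the definition of $f$ continuous in $z$, so the maximum is attained on the compact set $Z$. Second, if $w \in \inter C$ (the interior of the ordering cone) then $\langle w, z\rangle > 0$ for every $z \in Z$: choosing $\varepsilon > 0$ so that the Euclidean ball of radius $\varepsilon$ about $w$ lies in $C$ and testing the defining inequality of $C$ at the point $w - \varepsilon z/\|z\|$ gives $\langle w, z\rangle \ge \varepsilon\|z\| > 0$.

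With these in hand, assume $y \prec_C z$, that is $z - y \in \inter C$. By the second observation $\langle z - y, z'\rangle > 0$, so $\langle y, z'\rangle < \langle z, z'\rangle$, for every $z' \in Z$; dividing by $\langle e, z'\rangle > 0$ preserves the strict inequality. Picking $z^* \in Z$ that attains the maximum defining $f(y)$ (legitimate by the first observation), we obtain
$$
f(y) = \frac{\langle y, z^*\rangle}{\langle e, z^*\rangle} < \frac{\langle z, z^*\rangle}{\langle e, z^*\rangle} \le \max_{z' \in Z}\frac{\langle z, z'\rangle}{\langle e, z'\rangle} = f(z),
$$
which is the strict monotonicity sought.

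The only genuinely delicate point is the second observation — the characterization of the cone interior through the test set $Z$ — together with the uniform positivity of $\langle e, \cdot\rangle$ on $Z$ that guarantees the maximum is attained; the remainder is a routine chain of inequalities. It is worth noting that dropping strictness (i.e.\ using only $y \preceq_C z$) yields merely $f(y) \le f(z)$, as recorded in part (ii) of Lemma~\ref{lem.scal.0}, which would not suffice to rule out the existence of a dominating point and hence could not deliver weak efficiency.
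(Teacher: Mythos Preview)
Your argument is correct and complete. The paper itself does not give a proof but merely refers to \cite[Proposition~3.1]{BentoCNS2014}; your contraposition via strict $C$-monotonicity of the scalarization $f$ is the standard route and almost certainly the one intended by that reference, so there is no substantive difference in approach---you have simply supplied the details that the paper omits.
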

\begin{proof}
It is similar to the proof of  \cite[Proposition 3.1]{BentoCNS2014}.
\end{proof}
Now   we introduce the  proximal  point method for vector optimization.  Let   $\{\lambda^k\}$   be    a sequence of positive numbers and    $\{e^k\}\subset \mbox{int}\, C$ such that $\|e^k\|=1$, for $k=0,1,  \ldots$.  Consider the sequence    of functions 
\begin{equation} \label{eq:fk}
f_k(y):=\max_{z\in Z}\frac{\langle y,z \rangle}{\langle e^k,z \rangle}, \qquad  k=0,1,  \ldots.
\end{equation}
The  {\it proximal  point method}  for solving  \eqref{VOP}, with starting point  $p^0\in M$,  is defined by 
\begin{equation}\label{eq.alg1}
p^{k+1}:= \mbox{argmin}_{p \in \Omega_k} f_k\left(F(p)+\frac{\lambda_k}{2} d^2(p,p^k)e^k\right), \qquad k=0, 1, \ldots, 
\end{equation}
where $\Omega_k :=\{p\in M~:~F(p)\preceq_C  F(p^k)\}$.   From now on $\{p^k\}$ denotes the sequence generated by the  proximal  point method, with starting point  $p^0\in M$. 
\section{Convergence Analysis} \label{sec3}
In this section,  we prove  the full  convergence  of the   proximal  point method  to a weak efficient point.  For this purpose,  we  need to define the convexity of a function  with respect to the  order induced  by $C$. A vectorial  function  $F:M\rightarrow\mathbb{R}^m$ is called $C$-\emph{ convex} if, for  $p , q \in M$ and $\gamma:[0,1]\rightarrow M$ a geodesic segment joining $p$ to $q$, there holds $F(\gamma(t))\preceq_C (1-t)F(p) +	tF(q),$ for all $ t \in[0,1]$.

We also  need of the following assumption:
\begin{description}
	\item[{\bf (A2)}]  $ \bar\Omega:=\bigcap_{k=0}^{\infty} \Omega_k \neq \varnothing $.
\end{description}
\begin{remark}
In general the set $\bar\Omega$ in {\bf (A2)} can be an empty set. One way to guarantee that $\bar\Omega$ is nonempty is to assume: for   each $p^0\in M$ the set $(F(p^0)-C)\cap F(M)$ is  $C$-{\it complete}  (see \cite[Section 19]{Luc1989}),   meaning  that each sequence $\{q^k\}\subset M$, with $q^0=p^0$,  such that $F(q^{k+1})\preceq_C F(q^k)$,  for $k=0, 1,  \ldots $, there exists  $q \in M$ such that $F(q)\preceq_C F(q^k)$, for $k=0, 1,  \ldots $.  This assumption  is standard to ensure the convergence  of descent methods in vector optimization; see, for instance,  \cite{Bonnel2005, Ceng2007, Fukuda2013, DrummondSvaiter2005, Villacorta2011}.
\end{remark}
Now we ready to state and  prove  the main result of this section. 
\begin{theorem}\label{ex.eff.point}
Let $F:M\rightarrow R^m$ be a  $C$-convex  function,  and assume that   {\bf (A1)} and {\bf (A2)} hold and $\{\lambda^k\}$  is bounded.  Then,   $\{p^k\}$ is well defined and converges to a  weakly efficient point.    
\end{theorem}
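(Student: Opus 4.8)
The plan is to establish the theorem in two distinct phases: first proving that the sequence $\{p^k\}$ is \emph{well defined}, and then proving that it \emph{converges to a weakly efficient point}. For well-definedness, I would show that each subproblem in \eqref{eq.alg1} admits a minimizer. Fixing $k$, consider the scalar objective $p\mapsto f_k\!\left(F(p)+\tfrac{\lambda_k}{2}d^2(p,p^k)e^k\right)$ on the feasible set $\Omega_k$. Using the representation \eqref{eq:fk}, the $C$-convexity of $F$, Lemma~\ref{lem.scal.0} (monotonicity and positive homogeneity of $f_k$), and the strong convexity of $d^2(\cdot,p^k)$, I would argue this objective is strongly convex along geodesics and lower semicontinuous. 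Since $\Omega_k$ is closed and convex (by $C$-convexity of $F$ and closedness of $C$), and the strongly convex term $\tfrac{\lambda_k}{2}d^2(p,p^k)$ forces coercivity (here I use that bounded closed subsets of the Hadamard manifold $M$ are compact, plus $\lambda_k>0$), a unique minimizer $p^{k+1}$ exists. I would also verify $\Omega_{k+1}\subseteq\Omega_k$, so the nested sets are consistent with assumption \textbf{(A2)}.

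The heart of the convergence argument is to show that $\{p^k\}$ is Fej\'er convergent to $\bar\Omega$, which is nonempty by \textbf{(A2)}. The plan is to fix an arbitrary $w\in\bar\Omega$, so that $w\in\Omega_k$ is feasible for the $k$-th subproblem, and exploit the optimality of $p^{k+1}$. From the subdifferential optimality condition at $p^{k+1}$ (combining the subgradient inequality for the convex scalarized objective with the normal cone $N_{\Omega_k}(p^{k+1})=\partial I_{\Omega_k}(p^{k+1})$), I expect to extract a relation of the form $\langle \lambda_k\,\exp_{p^{k+1}}^{-1}p^k + s,\ \exp_{p^{k+1}}^{-1}w\rangle \ge 0$ for an appropriate subgradient $s$ coming from the $f_k\circ F$ term, whose contribution is nonnegative when paired with $\exp_{p^{k+1}}^{-1}w$ because $w\in\Omega_k$ means $F(w)\preceq_C F(p^k)$ and hence the scalarized value decreases. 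Feeding the key Hadamard inequality \eqref{ineq.had.} with $p_1=w$, $p_2=p^k$, $p_3=p^{k+1}$ into this relation should yield
\begin{equation*}
d^2(w,p^{k+1})+d^2(p^{k+1},p^k)\le d^2(w,p^k),
\end{equation*}
which gives both the Fej\'er inequality $d^2(w,p^{k+1})\le d^2(w,p^k)$ and, by summing, that $d(p^{k+1},p^k)\to 0$.

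With Fej\'er convergence in hand, Proposition~\ref{fejer} tells me $\{p^k\}$ is bounded, hence (by compactness of closed bounded sets in $M$) has an accumulation point $\bar p$, along a subsequence $\{p^{k_j}\}$. The remaining task is to show $\bar p$ is weakly efficient and lies in a set to which Fej\'er convergence applies, so that Proposition~\ref{fejer} upgrades subsequential to full convergence. To identify $\bar p$ as weakly efficient, I would use the optimality characterization together with $d(p^{k+1},p^k)\to 0$: in the limit the proximal perturbation $\tfrac{\lambda_k}{2}d^2(p,p^k)e^k$ vanishes in the relevant sense, so $\bar p$ minimizes the scalarization $f(F(\cdot))$ over the limiting feasible set, and then Proposition~\ref{scal.1} converts this scalar minimality into weak efficiency for \eqref{VOP}. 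Finally I would verify that $\bar p\in\bar\Omega$ (using $F(p^{k+1})\preceq_C F(p^k)$, lower semicontinuity, and closedness of $C$), so that the Fej\'er property holds with respect to a set containing $\bar p$; Proposition~\ref{fejer} then forces $\lim_{k\to\infty}p^k=\bar p$.

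I expect the main obstacle to be the derivation of the decrease inequality from the optimality condition: on a manifold one cannot simply subtract points, so the argument must go entirely through subgradients in $T_{p^{k+1}}M$ and the exponential map, and care is needed to (i) correctly split the subdifferential of the composite objective into the $f_k\circ F$ part and the normal cone part of $I_{\Omega_k}$, (ii) control the sign of the $f_k\circ F$ contribution using that $w\in\Omega_k$, and (iii) apply the Hadamard comparison inequality \eqref{ineq.had.} with exactly the right choice of base point $p^{k+1}$. A secondary subtlety is handling the varying scalarizations $f_k$ (through the varying vectors $e^k$) uniformly; here assumption \textbf{(A1)} and the boundedness of $\{\lambda^k\}$ together with $\|e^k\|=1$ should provide the uniform bounds needed to pass to the limit.
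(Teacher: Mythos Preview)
Your plan for well-definedness, for the Fej\'er inequality via the subdifferential splitting $0\in\partial(f_k\circ F)(p^{k+1})+N_{\Omega_k}(p^{k+1})-\lambda_k\exp_{p^{k+1}}^{-1}p^k$ combined with \eqref{ineq.had.}, and for the upgrade from a cluster point $\bar p\in\bar\Omega$ to full convergence via Proposition~\ref{fejer}, is exactly the paper's route.

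The one genuine gap is your argument for \emph{weak efficiency} of $\bar p$. You propose to pass to the limit so that ``$\bar p$ minimizes the scalarization $f(F(\cdot))$ over the limiting feasible set'' and then invoke Proposition~\ref{scal.1}. The problem is that there is no single scalarization $f$ available: the functions $f_k$ depend on $e^k\in\inter C$ with $\|e^k\|=1$, and nothing in the hypotheses forces $\{e^k\}$ to converge, let alone to a limit in $\inter C$. Along a subsequence one only gets $e^{k_j}\to\bar e$ on the unit sphere, and if $\bar e\in\partial C$ then $\inf_{z\in Z}\langle e^{k_j},z\rangle\to 0$ and the $f_{k_j}$ do not converge to a finite sublinear functional, so Proposition~\ref{scal.1} cannot be applied. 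The uniform bounds you allude to (compact $Z$, $\|e^k\|=1$) give $\langle e^k,z\rangle\le\max_{z\in Z}\|z\|$ from above but not a positive lower bound, which is what would be needed.

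The paper avoids this by never forming a limit scalarization. It argues by contradiction: if $F(\hat p)\prec_C F(\bar p)$ then $\hat p\in\bar\Omega$, and from the same optimality relation you already isolated one gets, for every $k$,
\[
f_k\!\bigl(F(\hat p)-F(\bar p)\bigr)\;\ge\; f_k(F(\hat p))-f_k(F(\bar p))\;\ge\;\lambda_k\bigl\langle\exp_{p^{k+1}}^{-1}p^k,\exp_{p^{k+1}}^{-1}\hat p\bigr\rangle,
\]
using sublinearity of $f_k$, $\bar p\in\bar\Omega$, and the normal-cone sign. The right-hand side tends to $0$ because $\{\lambda_k\}$ is bounded and $d(p^{k+1},p^k)\to 0$. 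For the left-hand side, compactness of $Z$ from \textbf{(A1)} together with $F(\hat p)-F(\bar p)\in -\inter C$ gives $\langle F(\hat p)-F(\bar p),z\rangle\le -\delta<0$ for all $z\in Z$, while $\langle e^k,z\rangle\le \max_{z\in Z}\|z\|$; hence $f_k(F(\hat p)-F(\bar p))\le -\delta/\max_{z\in Z}\|z\|<0$ uniformly in $k$. These two facts are incompatible, so $\bar p$ is weakly efficient. This is the missing ingredient in your plan: a $k$-uniform strictly negative upper bound on $f_k(F(\hat p)-F(\bar p))$, obtained from compactness of $Z$, which replaces the appeal to Proposition~\ref{scal.1}.
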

\begin{proof}
Let $\{f_k\}$  be the sequence of functions defined in \eqref{eq:fk},  and define $\varphi_k : M\rightarrow \mathbb{R}\cup \{+\infty\}$ by 
$$
\varphi_k(p)=f_k\left( F(p) + I_{\Omega_k}(p)e^k + \lambda_k  d^2(p,p^{k})e^k\right),  \qquad  k=0, 1, \ldots, 
$$
where $I_{\Omega_k}$ is the indicator function of $\Omega_k$.   From  item $i$ of Lemma~\ref{lem.scal.0}   we have 
\begin{equation} \label{eq:wdee}
\varphi_k(p)= f_k\left(F(p)\right)+I_{\Omega_k}(p)+\frac{\lambda_k}{2} d^2(p,p^k), \qquad k=0, 1, \ldots.
\end{equation}
Since  $F$ is $C$- convex,  then $f_k\circ F$ is convex and    $\Omega_k$ is a convex and  closed set. Hence $\varphi_k$ is  strongly convex and   lower semicontinuous on $\Omega_k$, for $k=0, 1, \ldots $. Thus, there exists  a unique $p^{k+1} \in \Omega_k$  such that 
\begin{equation} \label{eq:wddp}
p^{k+1}=\mbox{argmin}_{p\in M}\varphi_k(p),  \qquad  k=0, 1, \ldots, 
\end{equation}
which implies that $\{p^k\}$ is well defined and the first part of the  proposition is proved. 

Using  convexity of  $\varphi_k$  and    \eqref{eq:wddp}  we conclude that   $0\in \partial \varphi_k(p^{k+1})$, which from  \eqref{eq:wdee} yields 
$$
0\in \partial \left( f_k\circ F(\cdot) + I_{\Omega_K}(\cdot) + \frac{\lambda_k}{2} d^2(\cdot,p^k)\right)(p^{k+1}),   \qquad  k=0, 1, \ldots.
$$
Last inclusion implies that  there exist  $w^{k+1}\in\partial( f_k\circ F)(p^{k+1})$ and $v^{k+1}\in N_{\Omega_k}(p^{k+1})$ such that 
\begin{equation}\label{conv.eq.2}
 \exp^{-1}_{p^{k+1}}p^k= \frac{1}{\lambda_k}\left( w^{k+1} + v^{k+1}\right),   \qquad  k=0, 1, \ldots.
\end{equation}  
On the other hand, using inequality \eqref{ineq.had.}  with $p_1=p\in M$, $p_2=p^k$ and $p_3=p^{k+1}$,  we  have
$$
d^2(p, p^{k+1})+d^2(p^{k+1},p^k)-\langle\exp^{-1}_{p^{k+1}}p,\exp^{-1}_{p^{k+1}}p^{k}\rangle\leq d^2(p, p^{k}),    \qquad  k=0, 1, \ldots.
$$
Substituting  the equality in   \eqref{conv.eq.2} into the  last  inequality,  we obtain
$$
d^2(p, p^{k+1})\leq d^2(p, p^{k})-d^2(p^{k+1},p^k)+\frac{1}{\lambda_k}\langle\exp^{-1}_{p^{k+1}}p, w^{k+1}+v^{k+1}\rangle,  \qquad  k=0, 1, \ldots.  
$$
 Since $f_k\circ F$ is convex and $w^{k+1}\in\partial( f_k\circ F)(p^{k+1})$ then  assumption {\bf (A2)} implies 
$$
\left\langle\exp^{-1}_{p^{k+1}}p,\,w^{k+1}\right\rangle\leq f_k(F(p))-f_k(F(p^{k+1}))\leq 0,  \qquad p\in  \bar\Omega,  \qquad  k=0, 1, \ldots, 
$$
where the last inequality follows from   item $ii$  of Lemma~\ref{lem.scal.0}.  Now, taking into account that $ \bar\Omega\subset \Omega_k$  and $v^{k+1}\in N_{\Omega_k}(p^{k+1})$, for $ k=0, 1, \ldots$,   we have
$$ 
\langle\exp^{-1}_{p^{k+1}}p, \, v^{k+1}\rangle\leq0,    \qquad p\in  \bar\Omega,  \qquad  k=0, 1, \ldots.
$$
Therefore,   taking into account  {\bf (A2)}, we can  combine three last inequalities to conclude  that  $\{p^{k}\}$  is Fej\'er convergence to $ \bar\Omega$.  In particular,     Proposition~\ref{fejer}  implies that   $\{p^{k}\}$ is bounded. Let $\bar{p}$  be a  cluster point of  $\{p^k\}$ and  $\{p^{k_j}\}$ a subsequence   of     $\{p^k\}$  such that $\lim_{k\rightarrow\infty} p^{k_j} = \bar{p}$. Note that \eqref{eq.alg1} yields 
$$
F(p^{k+1})\preceq_C F(p^k), \qquad   k=0, 1, \ldots.
$$
Hence, the continuity of  $F$   implies   that   $F(\bar{p})\preceq_C F(p^k)$, for  $k=0, 1, \ldots$, which is equivalent to say  $\bar{p}\in \bar\Omega$.   Using  Proposition~\ref{fejer} we conclude that  $\{p^k\}$ converges to $\bar{p}$.

It remains to prove that $\bar{p}$ is a weakly efficient point.  Let us suppose,  by contradiction,  that  there exists $\hat{p}\in M$ such that $F(\hat{p})\prec_C F(\bar{p})$ (note that,  in particular,    $\hat{p}\in \Omega$).   Thus,  since   $\bar{p}\in \bar\Omega$, using item $ii)$ of Lemma~\ref{lem.scal.0}   we have 
\begin{equation}\label{conv.eq.4}
f_k(F(\hat{p}))-f_k(F(\bar{p}))\geq f_k(F(\hat{p}))-f_k(F(p^{k+1}))\geq\langle w^{k+1}, \exp^{-1}_{p^{k+1}}\hat{p}\rangle,
\end{equation} 
where the last inequality was obtained by  using  the convexity of  the function   $f_k\circ F$  and  that  $w^{k+1} \in \partial (f_k\circ F)(p^{k+1})$. Using \eqref{conv.eq.2} we obtain 
$$
\langle w^{k+1},\exp^{-1}_{p^{k+1}}\hat{p}\rangle=-\langle v^{k},\exp^{-1}_{p^{k+1}}\hat{p}\rangle +\lambda_{k}\langle\exp^{-1}_{p^{k+1}}p^{k},\exp^{-1}_{p^{k+1}}\hat{p}\rangle. 
$$
Using the definition of $f_k$ we have $f_k(F(\bar{p})-F(\hat{p}))\geq f_k(F(\bar{p}))-f_k(F(\hat{p}))$, for $k=0,1, \ldots$. Thus, combining the last equality  with \eqref{conv.eq.4} and taking into account that $v^{k+1}\in N_{\Omega_k}(p^{k+1})$   we have 
$$
f_k(F(\bar{p})-F(\hat{p}))\geq f_k(F(\bar{p}))-f_k(F(\hat{p}))\geq \lambda_{k}\langle\exp^{-1}_{p^{k+1}}p^{k},\exp^{-1}_{p^{k+1}}\bar{p}\rangle.
$$
Hence, using again the  definition of $f_k$, the last inequality becomes 
$$
\max_{z\in Z} \frac{\langle F(\bar{p})-F(\hat{p}),z \rangle}{\langle e_k,z\rangle}\geq \lambda_{k}\langle\exp^{-1}_{p^{k+1}}p^{k},\exp^{-1}_{p^{k+1}}\bar{p}\rangle.
$$
Since $Z$ is a compact set, then  there exists $\bar{z}\in Z$  such that 
$$ 
 \langle F(\bar{p})-F(\hat{p}),\bar{z} \rangle\geq \lambda_{k}\langle\exp^{-1}_{p^{k+1}}p^{k},\exp^{-1}_{p^{k+1}}\bar{p}\rangle \langle e_k,\bar{z}\rangle.
$$ 
Note that the sequences  $\{ \langle e_k,\bar{z}\rangle\}$ and  $\{\lambda^k\}$ are bounded. Thus, letting $k$ goes to infinity in the  last inequality, we have
$$
 \langle F(\bar{p})-F(\hat{p}),\bar{z}\rangle \geq 0, 
$$ 
which  contradicts the fact that $F(\bar{p})\prec_C F(\hat{p})$ and the desired result follows.  
\end{proof}
\section{ Final Remarks} \label{sec4}
It is worth to point out that the nonlinear scalar function, see \eqref{eq:fk},  considered in the iterative step process of the algorithm, see \eqref{eq.alg1},  allows a relationship between the weak sharp minima set of the vectorial optimization problem  and the weak sharp minima set of the scalarized problem. For state this  relationship,   we need some definitions and results.     Let $G:M\rightarrow R^m$,  $\eta \in \mathbb{R}^m$ and let us define  the following level set 
$$
W_{\eta}:=\left\{p\in M ~:~ G(p)=\eta \right\}.
$$
We denote by $\mbox{Min}G$  (resp. $\mbox{WMin} G$) the {\it set of the efficient points} (resp. {\it weak efficient points}) associated to \eqref{VOP}. 
\begin{definition} \label{def:wsm}
A point   $\hat{p}\in M$  is said to be  weak sharp minimum to \eqref{VOP},  if there is a constant $\tau>0$ such that 
\begin{equation} \label{eq:wsm1}
G(p)-G(\hat{p})\notin B(0,\tau d(p,W_{G(\hat{p})}))-C, \qquad p\in M\backslash W_{G(\hat{p})}, 
\end{equation}
The set  of all    weak sharp minimum to \eqref{VOP} is denoted by $ \emph{WSMin}_G $.
\end{definition}
The above  definition has appeared in several  contexts, see for example, \cite{Bednarczuk2007, BentoCNS2014, Studniarski2007, XL2010}.  Note that the relationship  \eqref{eq:wsm1} can be expressed in following equivalent form
$$
d(G(p)-G(\hat{p}),-C)\geq \tau d(p,W_{G(\hat{p})}), \qquad p\in M, 
$$
and  there holds $ \emph{WSMin}_G \subset \mbox{Min}G $. In the particular case $m$=1 and $C=\mathbb{R}_{+}$, the last inequality  becomes to the well-known  inequality 
$$
G(p)-G(\hat{p})\geq \tau d(p,W_{G(\hat{p})}),  \qquad p\in M, 
$$ 
introduced in  \cite{LiMordukhovich2011},  defining   weak sharp minimizer in  Riemannian context.

Next result establishes the above mentioned relationship  between $\mbox{WSMin}F$  and the   weak sharp minimum associated  to the  nonlinear scalar function defined  in \eqref{non.scal.1}, the proof  follows by  using  similar arguments used in the proof of \cite[Theorem 3.4]{XL2010}.
\begin{theorem}\label{sharp.1}
Let $F:M\rightarrow R^m$ and $\hat{p} \in M$. Suppose that $W_{F(\hat{p})}$ is closed set and define $\tilde{F}: M\rightarrow \mathbb{R}^n$ by $\tilde{F}(p)=F(p)-F(\hat{p})$.   If  $  \hat{p}\in   \emph{WSMin}_F$ then  $\hat{p}\in  \emph{WSMin}_ {f\circ \tilde{F}}$, where $f$ is given by  \eqref{non.scal.2}. 
\end{theorem}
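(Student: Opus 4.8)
The plan is to deduce the scalar weak-sharpness of $f\circ\tilde{F}$ from the vector weak-sharpness of $F$ by means of a quantitative comparison between the scalarization $f$ and the distance $d(\,\cdot\,,-C)$. First I would set up the reduction. Since $\tilde{F}(\hat{p})=0$ and $f(0)=0$, the level set of $f\circ\tilde{F}$ at the value $(f\circ\tilde{F})(\hat{p})=0$ is $W_{0}:=\{p\in M:\ f(\tilde{F}(p))=0\}$; because $F(p)=F(\hat{p})$ forces $\tilde{F}(p)=0$ and hence $f(\tilde{F}(p))=0$, we get the inclusion $W_{F(\hat{p})}\subseteq W_{0}$ and therefore $d(p,W_{0})\le d(p,W_{F(\hat{p})})$ for all $p\in M$. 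Moreover, $\hat{p}\in\mbox{WSMin}_{F}$ supplies a constant $\tau>0$ with $d(\tilde{F}(p),-C)\ge\tau\, d(p,W_{F(\hat{p})})$, and it also forces $\tilde{F}(p)\notin-\inter C$ for every $p$ (for $p\notin W_{F(\hat{p})}$ this is read off \eqref{eq:wsm1} with the choice $b=0$ in $B(0,\tau d(p,W_{F(\hat{p})}))$, while for $p\in W_{F(\hat{p})}$ one has $\tilde{F}(p)=0\in\partial(-C)$).

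The heart of the argument is the comparison estimate $f(y)\ge\|e\|^{-1}d(y,-C)$ for every $y\notin-\inter C$, which I would isolate as a lemma. To prove it I would use the dual description of the distance to a cone, $d(y,-C)=\max\{\langle y,u\rangle:\ u\in C^{*},\ \|u\|\le1\}$, together with the structural consequence of {\bf (A1)} that $C^{*}$ is the closed convex cone generated by $Z$; indeed $C=\{y:\langle y,z\rangle\ge0,\ z\in Z\}$ is the dual of that cone, so the bipolar theorem gives $C^{*}=\overline{\operatorname{cone}(\conv Z)}$. The key step is to observe that, for a fixed $y$, the definition \eqref{non.scal.2} of $f$ together with $\langle e,z\rangle>0$ on $Z$ (valid since $e\in\inter C$ pairs strictly positively with every nonzero element of $C^{*}\supseteq Z$) gives $\langle y,z\rangle\le f(y)\langle e,z\rangle$ for all $z\in Z$; hence the closed half-space cone $D:=\{u:\ \langle y-f(y)e,\,u\rangle\le0\}$ contains $Z$ and, being a closed convex cone, contains all of $C^{*}$. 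Evaluating at a maximizer $\bar{u}\in C^{*}$, $\|\bar{u}\|\le1$, of the dual problem and applying the Cauchy--Schwarz inequality yields $d(y,-C)=\langle y,\bar{u}\rangle\le f(y)\langle e,\bar{u}\rangle\le\|e\|\,f(y)$, which is the claim; the hypothesis $y\notin-\inter C$ enters only to ensure $f(y)\ge0$ (on $\partial(-C)$ both sides vanish).

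Granting the lemma, the conclusion follows by chaining the estimates. For $p\in W_{0}$ the target inequality is trivial, since $f(\tilde{F}(p))=0$ and $d(p,W_{0})=0$. For $p\notin W_{0}$ we have $\tilde{F}(p)\notin-\inter C$, so the lemma, the weak-sharpness of $F$, and the inclusion $W_{F(\hat{p})}\subseteq W_{0}$ give
\[
f(\tilde{F}(p))\ \ge\ \frac{1}{\|e\|}\,d(\tilde{F}(p),-C)\ \ge\ \frac{\tau}{\|e\|}\,d(p,W_{F(\hat{p})})\ \ge\ \frac{\tau}{\|e\|}\,d(p,W_{0}).
\]
Since $(f\circ\tilde{F})(\hat{p})=0$, this is precisely the scalar weak sharp minimum inequality for $f\circ\tilde{F}$ with constant $\tau'=\tau/\|e\|$, so $\hat{p}\in\mbox{WSMin}_{f\circ\tilde{F}}$.

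I expect the main obstacle to be the comparison lemma of the second paragraph, and specifically the passage from the generating set $Z$ to the entire dual cone $C^{*}$: the whole estimate hinges on the fact that the half-space cone cut out by $\langle y,\cdot\rangle\le f(y)\langle e,\cdot\rangle$ absorbs $C^{*}$, which is exactly where {\bf (A1)} (through $C^{*}=\overline{\operatorname{cone}(\conv Z)}$) is indispensable and where the explicit constant $\|e\|^{-1}$ is produced. By contrast, the level-set inclusion and the final chaining are routine manipulations of the definitions.
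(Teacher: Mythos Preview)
Your proposal is correct. The paper does not supply a proof of this theorem at all; it simply states that the argument follows along the lines of \cite[Theorem~3.4]{XL2010}. Your write-up therefore goes well beyond the paper: the comparison lemma $f(y)\ge\|e\|^{-1}d(y,-C)$ for $y\notin-\inter C$, proved via the bipolar identity $C^{*}=\overline{\operatorname{cone}(\conv Z)}$ and the half-space absorption trick, together with the level-set inclusion $W_{F(\hat p)}\subseteq W_{0}$ and the final chaining, constitutes a full, self-contained argument in the Hadamard setting. This is the natural route (and is in the spirit of the Xu--Li result the paper cites), so there is no divergence to discuss---only that you have written out what the paper leaves to a reference.
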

We expect that the Theorem~\ref{sharp.1} constitutes a first step towards  to establish the following result:  ``If $\hat{p} \in  \emph{WSMin}_F$, then   $\{p^k\}$ converges, in a finite number of iterations". We foresee further progress along these line in the nearby future. Similar result has been proven in the  Euclidean context; see \cite{BentoCNS2014}.

\end{document}